\theoremstyle{theorem}
\newtheorem{theorem}{Theorem}
\newtheorem{proposition}[theorem]{Proposition}
\newtheorem{lemma}[theorem]{Lemma}
\newtheorem{question}[theorem]{Question}
\newtheorem{conjecture}[theorem]{Conjecture}
\newtheorem*{rep@theorem}{\rep@title}
\newcommand{\newreptheorem}[2]{%
\newenvironment{rep#1}[1]{%
 \def\rep@title{#2 \ref{##1}}%
 \begin{rep@theorem}}%
 {\end{rep@theorem}}}
\theoremstyle{definition}
\newcommand{\Aa}{\mathcal A}
\newcommand{\Bb}{\mathcal B}
\def\@seccntformat#1{%
  \protect\textup{\protect\@secnumfont
    \ifnum\pdfstrcmp{subsection}{#1}=0 \bfseries\fi
    \csname the#1\endcsname
    \protect\@secnumpunct
  }%
}  
\begin{document}

\rhead{\thepage}
\lhead{\author}
\thispagestyle{empty}


\raggedbottom
\pagenumbering{arabic}
\setcounter{section}{0}


\title{Positive braids minimize ascending number}

\author{Lowell Davis}
\address{Department of Mathematics, Western Washington University, Bellingham, WA}
\email{fdavis.lowell@gmail.com}
\urladdr{} 

\author{Jeffrey Meier}
\address{Department of Mathematics, Western Washington University, Bellingham, WA}
\email{jeffrey.meier@wwu.edu}
\urladdr{http://jeffreymeier.org} 

\begin{abstract}
	A well-known algorithm for unknotting knots involves traversing a knot diagram and changing each crossing that is first encountered from below.
	The minimal number of crossings changed in this way across all diagrams for a knot is called the \emph{ascending number} of the knot.
	The ascending number is bounded below by the unknotting number.
	We show that for knots obtained as the closure of a positive braid, the ascending number equals the unknotting number.
	
	We also present data indicating that a similar result may hold for positive knots.
	We use this data to examine which low-crossing knots have the property that their ascending number is realized in a minimal crossing diagram, showing that there are at most 5 hyperbolic, alternating knots with at most 12 crossings with this property.
\end{abstract}

\maketitle

\section{Introduction}
\label{sec:into}

The unknotting number of a knot is one of the oldest knot invariants.
It was introduced as early as 1877 by Peter Guthrie Tait, who referred to it as the ``beknottedness'' of a knot.
Nearly 150 years later, many basic aspects of the invariant remain unknown.
For example, it is unknown whether the unknotting number is additive under connected sum~\cite{Scharlemann:unknotting}, and there are simple knots, such as $\mathbf{10_{11}}$, for which the value of the invariant is unknown~\cite{KnotInfo}.
For another testament to the elusiveness of this invariant, recall that, in 1992, Kronheimer and Mrowka~\cite{Kronheimer-Mrowka:gauge} gave a lower-bound for the unknotting number of torus knots using gauge theory, settling a 25-year-old conjecture of Milnor.
(Upper-bounds were previously obtained using more geometric methods~\cite{Rudolph:Seifert,Boileau-Weber:noeds}.)

There is a well-known algorithm to change any knot diagram into a diagram for the unknot, which we refer to as the \emph{roller-coaster algorithm}, that consists of traversing a knot diagram and changing every crossing that is first encountered along a strand passing below the crossing.
The resulting diagram is called an \emph{ascending diagram}.
The \emph{ascending number}  $a(K)$ of a knot $K$ is the fewest number of crossings changed when the roller-coaster algorithm is applied to a diagram of $K$.
Clearly, we have $a(K)\geq u(K)$ for any knot $K$.
This invariant was introduced by Ozawa~\cite{Ozawa:ascending}, who established a number of foundational results\footnote{This invariant was also studied by Shimizu, who called it the \emph{warping degree}~\cite{Shimizu:warping}.}.
In particular, he showed that $a(T_{p,q}) = u(T_{p,q})$ for any torus knot $T_{p,q}$.

In this paper, we generalize this result of Ozawa to the class of knots obtained as the closure of positive braids.

\begin{theorem}
\label{thm:positive_braid}
	If $K$ is the closure of a positive braid, then $a(K) = u(K)$.
\end{theorem}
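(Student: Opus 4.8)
The plan is to sandwich $a(K)$ between $g(K)$ and $g(K)$, where $g(K)$ denotes the Seifert genus. Since the inequality $a(K)\ge u(K)$ is automatic, it suffices to prove
\[
g(K)\le u(K)\le a(K)\le g(K),
\]
which forces all three to coincide. Thus I would separately establish a lower bound $u(K)\ge g(K)$ and an upper bound $a(K)\le g(K)$.

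\textbf{Lower bound.} Write $K$ as the closure of a positive braid on $n$ strands with $c$ crossings, so Seifert's algorithm applied to the standard diagram produces a surface of genus $(c-n+1)/2$. First I would recall that a crossing change is realized by a genus-one cobordism, giving $u(K)\ge g_4(K)$, and that the slice--Bennequin inequality (Rudolph; ultimately the Kronheimer--Mrowka input in the guise Ozawa uses for torus knots) yields $g_4(K)\ge (c-n+1)/2$ for positive braids. Combined with $g_4(K)\le g(K)\le (c-n+1)/2$, this gives $g_4(K)=g(K)=(c-n+1)/2$ and hence $a(K)\ge u(K)\ge g(K)$. This part is essentially standard; positivity is what makes the genus bound tight.

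\textbf{Upper bound.} The heart of the argument is to exhibit a diagram whose \emph{warping degree} is at most $g(K)$, since the roller-coaster algorithm changes exactly the warping crossings and $a(K)$ is the minimum warping degree over all diagrams and basepoints. I would use the standard closed-braid diagram $D$ and organize the traversal into $n$ \emph{descents} (top-to-bottom passes through the braid box), which is exactly the number of strands because the braid permutation is an $n$-cycle. Each crossing is met once from below and once from above, in two distinct descents, and it is a warping crossing precisely when its under-pass lies in an earlier descent than its over-pass. Following a single descent, the current strand-position performs a walk that steps right (passing under) or left (passing over) at each crossing it meets, so the signed count of over- versus under-passes in the descent beginning at top-position $p$ equals $p-\pi(p)$. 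Moving the basepoint from the descent starting at $p_t$ to the next one therefore changes the warping degree $W(t)$ by $p_t-p_{t+1}$, which telescopes to
\[
W(t)=W(0)+p_0-p_t .
\]
Since the start-positions $p_t$ range over all of $\{1,\dots,n\}$, the $n$ basepoints realize $n$ consecutive integers, and the minimum occurs at the descent beginning at the rightmost strand, $p_t=n$.

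\textbf{Main obstacle.} What remains, and what I expect to be the crux, is to evaluate this minimum and show it equals $(c-n+1)/2$. Equivalently, one must prove the global identity $\sum_x d_x=nc/2$, where $d_x$ is the cyclic distance in the traversal order from the under-descent to the over-descent of the crossing $x$; note this is \emph{not} a per-crossing balance (the individual $d_x$ need not equal $n/2$) but a structural consequence of the $n$-cycle occupancy of braid-positions. I would prove it by tracking, crossing by crossing, which descent occupies each braid-position, or by matching the warping crossings at the optimal basepoint to the non-tree edges of the Seifert graph, which has $n$ vertices, $c$ edges, and hence $2g=c-(n-1)$ independent cycles. Here positivity is used twice more: it guarantees a single, consistent over/under assignment at every crossing (making the descent walk well defined) and it is what pins the Seifert surface to be genus-minimizing. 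The warm-up case $T_{2,q}=\sigma_1^{\,q}$, where the descents alternate under/over and the rightmost-strand basepoint yields exactly $(q-1)/2$ warping crossings, should guide and verify the general count.
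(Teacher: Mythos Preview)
Your lower bound is exactly the paper's: both invoke Rudolph's slice--Bennequin result to get $u(K)\ge g(K)=(c-n+1)/2$ for a positive $n$--braid with $c$ crossings.  The upper-bound strategies diverge.  The paper does \emph{not} vary the basepoint or telescope; instead it fixes the basepoint at the top-left of the braid diagram and proves, by induction on the number of crossings, that the roller-coaster algorithm there first meets $|\Aa|$ crossings from above and $|\Bb|$ from below with $|\Aa|-|\Bb|=n-1$.  The induction splits into two cases: if the diagram contains a bigon, resolve both of its corners (this removes one crossing from each of $\Aa$ and $\Bb$); if not, the strands can be taken monotone, one well-chosen crossing is resolved, and an entire strand is deleted, dropping to an $(n-1)$--strand braid.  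From $|\Aa|-|\Bb|=n-1$ and $|\Aa|+|\Bb|=c$ one reads off $|\Bb|=(c-n+1)/2$ directly.

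Your telescoping observation---that the warping degrees $W(t)$ at the $n$ natural basepoints are $n$ consecutive integers---is correct and attractive, and is not in the paper.  But it only pins down the \emph{spread} $W_{\max}-W_{\min}=n-1$, not the \emph{value}; you still need to compute $W$ at one basepoint, which you correctly flag as the main obstacle and do not resolve.  Neither of your proposed closures works as written.  First, the identity $\sum_x d_x=nc/2$ is, via your own telescoping, \emph{equivalent} to $W_{\min}=(c-n+1)/2$: since $\sum_t W(t)=nW_{\min}+\binom{n}{2}$ and $\sum_t W(t)=\sum_x d_x$ by double counting, the two statements are the same, so invoking the identity is circular unless you give an independent proof of it (and ``tracking which descent occupies each braid-position'' recovers only $\sum_x(a_x-b_x)$, not $\sum_x[(a_x-b_x)\bmod n]$, which reintroduces $W(0)$).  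Second, the Seifert graph has $c-(n-1)=2g$ non-tree edges, not $g$, so there is no direct bijection with the $g$ warping crossings you are seeking.  In short, the missing step is exactly the computation that the paper's inductive lemma supplies; your framework is a genuine alternative route, but as it stands the crux is unproven.
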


Our methods are constructive and give upper-bounds for the ascending number; the relevant lower-bounds come from work of Rudolph that gives a reinterpretation of the result of Kronheimer and Mrowka to the setting of quasipositive knots~\cite{Rudolph:QP}.
Theorem~\ref{thm:positive_braid} is similar to Theorem~1.2 of~\cite{Kegel-Lewark-et-Al:unknotting}.
If $a(K) = u(K)$, we say that $K$ has the \emph{roller-coaster property}.
Ozawa showed that $a(K)=1$ if and only if $K$ is a twist knot, so most 2--bridge knots (for example) do not have the roller-coaster property.

We also consider the question of whether the unknotting number of a given knot is obtained by applying the roller-coaster algorithm to a minimal crossing diagram of the knot.
We say that such knots have the \emph{strong roller-coaster property}.
It is an easy exercise to check that, among twist knots, only the trefoil and figure-eight knot have the strong roller-coaster property.
We establish the following sufficient condition for a positive braid closure to have the strong roller-coaster property.

\begin{theorem}
\label{thm:positive_strong}
	Suppose that $K$ is the closure of a positive $n$--braid and the braid index of $K$ is $n$.
	Then, $K$ has the strong roller-coaster property.
\end{theorem}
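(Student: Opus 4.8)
The plan is to show that the standard closed-braid diagram $D_\beta$ attached to the positive $n$--braid $\beta$ with $\hat\beta=K$ is itself a minimal-crossing diagram, and then to run the construction underlying Theorem~\ref{thm:positive_braid} on this very diagram. Recall that the upper bound $a(K)\le u(K)$ in Theorem~\ref{thm:positive_braid} is realized by applying the roller-coaster algorithm to $D_\beta$ along a suitable traversal, changing exactly $u(K)=g(K)$ crossings, where $g(K)$ is the Seifert genus and the identification $u(K)=g(K)$ comes from Theorem~\ref{thm:positive_braid} together with Rudolph's lower bound. Hence it suffices to prove that $D_\beta$ has the fewest crossings among all diagrams of $K$: if so, this same application of the algorithm witnesses the strong roller-coaster property.

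First I would record the two relevant counts for $D_\beta$. Since $\beta$ is positive, Seifert's algorithm on $D_\beta$ smooths each crossing compatibly with the downward orientation of the braid and returns exactly the $n$ braid-position Seifert circles; the resulting Bennequin surface is assembled from $n$ disks and $c$ bands, where $c$ is the number of letters of $\beta$, so it has Euler characteristic $n-c$. For a positive braid this surface is of minimal genus (it is the fiber surface), whence $2g(K)=c-n+1$, that is, $c=2g(K)+n-1$. In particular $c(K)\le c=2g(K)+n-1$, since $D_\beta$ is a diagram of $K$.

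For the reverse inequality, let $D^*$ be a minimal-crossing diagram of $K$, with $\bar c=c(K)$ crossings and $s$ Seifert circles. Because $K$ is a knot, $D^*$ is connected, so Seifert's algorithm on $D^*$ produces a connected Seifert surface of genus $(\bar c-s+1)/2$, which can be no smaller than $g(K)$; thus $\bar c\ge 2g(K)+s-1$. By Yamada's theorem the number of Seifert circles of any diagram is at least the braid index of $K$, which by hypothesis equals $n$, so $s\ge n$ and $c(K)=\bar c\ge 2g(K)+n-1=c$. Combining the two inequalities gives $c(K)=c$, so $D_\beta$ is a minimal-crossing diagram. Applying the roller-coaster algorithm to $D_\beta$ as in Theorem~\ref{thm:positive_braid} then changes exactly $u(K)$ crossings in a minimal-crossing diagram, which is precisely the strong roller-coaster property.

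The step I expect to be the main obstacle is the claim in the first paragraph that the realization of $a(K)=u(K)$ in Theorem~\ref{thm:positive_braid} actually takes place on the unmodified diagram $D_\beta$, rather than on some auxiliary diagram of the same knot; if that construction alters $D_\beta$, I would need either to verify that the alteration preserves the crossing number or to re-run the traversal argument directly on $D_\beta$. By contrast, the minimality of $D_\beta$ is robust: it rests only on the genus formula for positive braids and on Yamada's identification of the braid index with the minimal number of Seifert circles, and both are insensitive to the particular positive word chosen for $\beta$ (indeed, for a fixed braid index $n$ the equation $2g(K)=c-n+1$ already forces every positive $n$--braid representative to have the same number of crossings).
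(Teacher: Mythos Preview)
Your argument is correct and follows the same overall strategy as the paper: show that the closed positive braid diagram $D_\beta$ has minimal crossing number, then invoke the construction underlying Theorem~\ref{thm:positive_braid} on that very diagram. The only substantive difference is in how crossing-number minimality is established. The paper simply cites Murasugi's result~\cite[Proposition~7.4]{Murasugi:braid_index} that a positive braid diagram realizing the braid index also realizes the crossing number; you instead supply a direct proof, combining the genus identity $c=2g(K)+n-1$ for positive braid closures with Yamada's theorem that the minimal Seifert-circle count equals the braid index, to obtain $c(K)\ge 2g(K)+n-1=c$. Your computation is essentially a transparent instance of Murasugi's argument, so the trade-off is a black-box citation versus a short self-contained calculation.

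Your flagged concern is well placed but turns out to be no obstacle: the proofs of Lemma~\ref{lem:pos_braid} and Theorem~\ref{thm:positive_braid} run the roller-coaster algorithm directly on the unaltered closed braid diagram $\widehat D$, starting at the top-left corner, and the paper's proof of Theorem~\ref{thm:positive_strong} makes exactly this observation before appealing to Murasugi.
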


Stoimenow showed that there exist knots with braid index four that are the closure of positive braids but not the closure of positive, 4--stranded braids~\cite[Example~7]{Stoimenow:crossing_number}, so the proof given above cannot show that every positive braid closure has the strong roller-coaster property.
However, the two examples given by Stoimenow, which have unknotting number six, \emph{do} have the strong roller-coaster property, it turns out; see Figure~\ref{fig:stoimenow_rc}, below.
This raises the following question.

\begin{question}
\label{ques:strong_positive}
	Does every positive braid closure have the strong roller-coaster property?
\end{question}

Finally, we use a SnapPy~\cite{SnapPy} computer program to investigate which knots with low crossing number have the roller-coaster property or strong roller-coaster property.
Our results are summarized in Tables~\ref{tab:8} and~\ref{tab:9}; see Theorems~\ref{thm:9cross} and~\ref{thm:alt_12} below.

Every positive knot in these tables has the roller-coaster property.
This motivates the following question regarding whether Theorem~\ref{thm:positive_braid} generalizes to positive knots.

\begin{question}
\label{ques:sqp}
	Does every positive knot have the roller-coaster property?
\end{question}

In contrast, $\mathbf{8_{20}}$ and $\mathbf{8_{21}}$ are quasipositive, but do not have the roller-coaster property, and many Whitehead doubles of strongly quasipositive knots are strongly quasipositive~\cite{Rudolph:QP}, but they have unknotting number one, hence cannot have the roller-coaster property~\cite{Fung:immersions}.

Using our data, we were also able to improve the known upper bound on the ascending number for three 9--crossing knots, determining the ascending number for two of these.
By restricting to alternating knots, we obtain the following.

\begin{theorem}
\label{thm:alt_12}
	Among the 1,846 alternating, hyperbolic knots with at most 12 crossings, at most five have the strong roller-coaster property:
	\begin{enumerate}
		\item The knots $\mathbf{4_1}$, $\mathbf{8_{12}}$, and $\mathbf{8_{18}}$ have the strong roller-coaster property; and
		\item The knots $\mathbf{12a_{181}}$ and $\mathbf{12a_{477}}$ have the strong roller-coaster property if their unknotting number turns out to be 3, rather than 2.
	\end{enumerate}
\end{theorem}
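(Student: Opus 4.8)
The plan is to reduce the statement to a finite computation and carry it out with SnapPy. For a knot $K$, let $\widetilde{a}(K)$ denote the minimum, over all minimal crossing diagrams $D$ of $K$, of the number of crossings changed by the roller-coaster algorithm applied to $D$ (itself minimized over basepoints and traversal directions). Since any run of roller-coaster crossing changes produces an ascending, hence unknotted, diagram, we have $\widetilde{a}(K)\ge a(K)\ge u(K)$, and $K$ has the strong roller-coaster property exactly when $\widetilde{a}(K)=u(K)$. Because every knot under consideration is alternating, the Tait conjectures guarantee that its minimal crossing diagrams are precisely its reduced alternating diagrams, and by the flyping theorem of Menasco and Thistlethwaite (the knots in question being prime) these form a single flype orbit; so the whole computation amounts to evaluating $a(D)$ on each diagram in this orbit and taking the minimum.

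To evaluate $a(D)$ for a fixed diagram efficiently, I would work with the warping degree $d_p(D)$, the number of crossings first met from below when traversing $D$ from a basepoint $p$. A short bookkeeping argument shows that at each crossing the first passage is an under-passage for exactly one of the two traversal directions, so $d^{+}_p(D)+d^{-}_p(D)=c(D)$ and hence $a(D)=\min_p\min\bigl(d_p(D),\,c(D)-d_p(D)\bigr)$. Moreover, sliding the basepoint past a single passage changes $d_p(D)$ by $\pm 1$ in a predictable way, so all $2c(D)$ values of $d_p$ can be read off from one traversal together with these updates. Running this over every diagram in the flype orbit then yields $\widetilde{a}(K)$, and feeding SnapPy's tabulated diagram data for the $1{,}846$ knots into this routine produces the full list of values.

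With $\widetilde{a}(K)$ in hand and $u(K)$ taken from KnotInfo, the classification is a direct comparison. The $u(K)=1$ cases split cleanly: Ozawa's theorem that $a(K)=1$ if and only if $K$ is a twist knot forces $\widetilde{a}(K)\ge a(K)\ge 2$ for the non-twist knots, while the elementary observation recorded above---that among twist knots only the trefoil and figure-eight have the strong property---handles the rest, leaving $\mathbf{4_1}$ as the only $u=1$ example. For $\mathbf{8_{12}}$ and $\mathbf{8_{18}}$ one exhibits a minimal diagram realizing $a(D)=u(K)=2$, and for every remaining knot the computation returns $\widetilde{a}(K)>u(K)$. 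Finally, $\mathbf{12a_{181}}$ and $\mathbf{12a_{477}}$ have $\widetilde{a}(K)=3$ but only $u(K)\in\{2,3\}$ is known, which is precisely why their status is conditional on the unknotting number being $3$. The main obstacle is guaranteeing that the flype enumeration is genuinely exhaustive, since overlooking even one minimal diagram could lower $\widetilde{a}(K)$ and convert a claimed non-example into an example; a secondary difficulty is that the two conditional cases hinge on unknotting numbers that remain undetermined in the literature.
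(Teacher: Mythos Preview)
Your proposal is correct and follows essentially the same approach as the paper: a computer enumeration of the roller-coaster algorithm over minimal-crossing diagrams, followed by comparison with the unknotting-number data on KnotInfo, with explicit witnesses for $\mathbf{4_1}$, $\mathbf{8_{12}}$, $\mathbf{8_{18}}$, $\mathbf{12a_{181}}$, and $\mathbf{12a_{477}}$. You are more explicit than the paper about invoking the Tait conjectures and the Menasco--Thistlethwaite flyping theorem to justify that enumerating the flype orbit of a single reduced alternating diagram suffices, and about the warping-degree bookkeeping $d_p^+(D)+d_p^-(D)=c(D)$; the paper simply defers to its computer program and tabulated output.
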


The above theorem verifies the following conjecture up to 12 crossings.

\begin{conjecture}
\label{conj:alt}
	If $K$ is a hyperbolic, alternating knot with the strong roller-coaster property, then the crossing number of $K$ is divisible by four.
\end{conjecture}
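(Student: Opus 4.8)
The plan is to translate the conjecture into a combinatorial statement about reduced alternating diagrams and then to extract a mod-$4$ obstruction from the interplay between the number of crossings the roller-coaster algorithm changes (the warping degree) and the signature. First I would reduce to reduced alternating diagrams: since $K$ is hyperbolic it is prime and non-torus, so by the work of Kauffman--Murasugi--Thistlethwaite together with the Menasco--Thistlethwaite flyping theorem, every minimal-crossing diagram of $K$ is a reduced alternating diagram, and any two are related by flypes. Thus the strong roller-coaster property is exactly the assertion that some reduced alternating diagram $D$, with some basepoint and orientation, has warping degree $w(D) = u(K)$, where $w(D)$ counts the crossings whose first encounter along the traversal is an undercrossing. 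Because $D$ is alternating, the $2c(K)$ crossing-encounters alternate over/under along the strand, which gives a clean bookkeeping device: labelling the encounters $1,\dots,2c(K)$ with odd positions over and even positions under, a crossing is changed precisely when its smaller position index is even.

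Two structural facts drive the argument, and both hold in any reduced diagram. Reversing the orientation with the basepoint fixed exchanges the two encounters of every crossing, hence flips the changed/unchanged status of each crossing; therefore $w(D) + w(\overline D) = c(K)$, where $\overline D$ is the reversed diagram. Moving the basepoint past a single encounter flips the status of exactly one crossing---here I use that $D$ is reduced, so consecutive encounters belong to distinct crossings---so as the basepoint traverses $D$ the quantity $w$ performs a closed $\pm 1$ walk. Consequently the warping degrees attainable from a fixed $D$ form an interval of integers whose residues mod $2$ are pinned by the parity class of the basepoint. The goal is to upgrade this to a statement mod $4$.

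The key step, and the step I expect to be the \emph{main obstacle}, is to show that $w(D) = u(K)$ can occur in a reduced alternating diagram only when $c(K) \equiv 0 \pmod 4$. I would attack this by pairing the warping count with the signature. Each crossing change alters the signature by at most $2$, giving the standard bound $u(K) \ge |\sigma(K)|/2$; the hope is that for alternating diagrams the crossings changed by the roller-coaster algorithm can be organized so that this bound, combined with the Murasugi-type congruences relating $\sigma(K)$, the determinant $\det(K)$, and the (flype-invariant) writhe of $D$, forces $2\,w(D) \equiv c(K) \pmod 4$ whenever $w(D)$ is minimal. Hyperbolicity enters precisely here: it excludes the torus knots $T_{2,n}$ and other non-hyperbolic alternating exceptions, for which the roller-coaster property is realized in an odd-crossing diagram and the congruence genuinely fails, so input beyond the pure $\pm 1$-walk combinatorics is needed to rule these out.

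Two caveats make this a genuinely hard conjecture rather than a routine verification, and they should be flagged. First, the unknotting numbers of $\mathbf{12a_{181}}$ and $\mathbf{12a_{477}}$ are themselves undetermined, so even the listed base cases are conditional; a proof must either sidestep exact unknotting numbers or compute them. Second, one must control $w(D)$ simultaneously across all flype-equivalent minimal diagrams and establish a sharp lower bound pinning the minimal warping degree to $c(K)/4$ in the extremal case. The $\pm 1$-walk argument delivers the correct parity of $w(D)$ but not obviously the correct residue mod $4$, and closing that gap---tying the minimal warping degree of an alternating diagram to $\sigma(K)$ with enough precision to see the factor of four---is where I expect the real work to lie.
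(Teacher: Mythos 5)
The statement you are trying to prove is Conjecture~\ref{conj:alt}, which the paper does \emph{not} prove: it is an open conjecture, supported only by the computational verification in Theorem~\ref{thm:alt_12} (exhaustive runs of the roller-coaster algorithm on diagrams of alternating hyperbolic knots through 12 crossings, compared against KnotInfo unknotting-number data). So there is no paper proof to match your argument against, and your proposal, by your own admission, is not a proof either. Your reduction to reduced alternating diagrams via the Tait flyping theorem is sound, and your basepoint bookkeeping is essentially correct --- in fact it is stronger than you state: since every crossing has exactly one over-encounter and one under-encounter, moving the basepoint past consecutive encounters of an alternating diagram produces a walk whose steps strictly alternate $+1,-1$, so for a fixed orientation the warping degree takes only \emph{two} adjacent values, not a general interval. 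This is Shimizu's territory (warping degree of alternating diagrams) and it pins parities, as you say.

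The genuine gap is exactly where you flag it, and it is fatal to the proposal as a proof: the step ``signature plus Murasugi-type congruences forces $2\,w(D)\equiv c(K)\pmod 4$ when $w(D)$ is minimal'' is pure hope, with no mechanism behind it. The bound $u(K)\geq|\sigma(K)|/2$ is far from sharp for alternating knots (there are alternating hyperbolic knots with $u(K)>|\sigma(K)|/2$), and Murasugi's congruences relate $\sigma(K)$ to $\det(K)$ and the writhe mod small integers, but none of this connects to the \emph{warping degree} of the diagram; no known invariant produces a mod-4 obstruction of the kind you need, which is presumably why the authors state this as a conjecture rather than a theorem. Note also that the conjecture interacts with the paper's Conjecture~\ref{conj:min-degree}, $a^{min}(c)=\lceil c/4\rceil$: your hoped-for congruence would essentially require the minimal warping degree over reduced alternating $c$-crossing diagrams to behave with mod-4 rigidity, and even that companion statement is only verified computationally (to 16 crossings). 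Your observation that hyperbolicity is needed to exclude $T_{2,n}$ (e.g., $\mathbf{3_1}$, $\mathbf{5_1}$, $\mathbf{7_1}$, $\mathbf{9_1}$ are alternating, have SRC, and have odd crossing number) is correct and matches the paper's tables, and your caveat about $\mathbf{12a_{181}}$ and $\mathbf{12a_{477}}$ matches the conditional clause in Theorem~\ref{thm:alt_12}; but these are consistency checks on a research program, not progress toward closing the mod-4 step.
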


Related to this is the following conjecture.
Let $a^{min}(c)$ denote the smallest ascending number obtained by a knot with a reduced alternating diagram with $c$ crossings.

\begin{conjecture}
\label{conj:min-degree}
	$a^{min}(c) =\lceil c/4\rceil$.
\end{conjecture}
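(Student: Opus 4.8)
The plan is to interpret $a^{min}(c)$ as the minimum, taken over all reduced alternating diagrams $D$ with $c$ crossings and over all choices of basepoint and orientation, of the number of crossings changed by the roller-coaster algorithm applied to $D$; call this quantity the (diagrammatic) warping degree $w(D)$, following Shimizu~\cite{Shimizu:warping}. The conjecture then splits into an upper bound $a^{min}(c)\le\lceil c/4\rceil$, which asks for a single good family of diagrams, and a lower bound $a^{min}(c)\ge\lceil c/4\rceil$, which asks that no reduced alternating $c$--crossing diagram does better. I expect the upper bound to be routine and the lower bound to carry the entire difficulty.

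For the upper bound, the building block is the standard $4$--crossing diagram of $\mathbf{4_1}$, which has warping degree $1$ by the strong roller-coaster property recorded in Theorem~\ref{thm:alt_12}. Since the warping degree is subadditive under connected sum of diagrams (one concatenates the optimal orderings of the factors), the diagram $\#^{k}\mathbf{4_1}$ is reduced alternating with $4k$ crossings and warping degree at most $k=\lceil 4k/4\rceil$, settling $c\equiv 0\pmod 4$. To cover the remaining residues I would connect-sum on a small alternating block of the correct size: $\mathbf{3_1}$ (three crossings, warping degree $1$) for $c\equiv 3$, the diagram $\mathbf{3_1}\#\mathbf{3_1}$ (six crossings, warping degree $2$) for $c\equiv 2$, and a five-crossing reduced alternating block of warping degree $2$, such as a minimal diagram of $\mathbf{5_1}$, for $c\equiv 1$. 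In each case the crossing number is exactly $c$ and the warping degree is exactly $\lceil c/4\rceil$, and the connected sum of reduced alternating diagrams remains reduced and alternating once the summands are joined along outermost arcs.

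The lower bound is where the real work lies, and the key structural observation is that in an alternating diagram the over/under pattern along the knot strictly alternates. Thus, for a fixed basepoint and orientation, among the $2c$ passages (labeled $O,U,O,U,\dots$) exactly $c$ are over-passages and $c$ are under-passages, one of each per crossing, and a crossing is changed precisely when its under-passage is encountered before its over-passage. I would encode the two passages of each crossing as a chord in a bipartite chord diagram on $2c$ alternately labeled points, so that $w(D)$ becomes the minimum, over all cuts and both directions of the cyclic order, of the number of chords whose $U$--endpoint precedes its $O$--endpoint. The target $w(D)\ge\lceil c/4\rceil$ then says that at least a quarter of the chords are ``under--first'' no matter how one cuts the circle, and the natural attack is a discharging argument that assigns to each under--first crossing a bounded set of at most three over--first crossings, using the planar realizability of the Gauss code (equivalently, the structure of the Tait/interlacement graph of the reduced alternating diagram) to rule out long runs of consecutive over--first crossings.

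The main obstacle will be pinning down the constant $1/4$ rather than merely obtaining \emph{some} linear lower bound. Because the inequality is sharp exactly at the connected sums $\#^{k}\mathbf{4_1}$, any charging scheme must be tight on these extremal diagrams, so the combinatorics has no slack and a purely local, crossing-by-crossing argument is unlikely to suffice; I expect that controlling the global interaction among the chords that is forced by realizability is the crux. A promising intermediate reduction is to prove the inequality first for prime alternating diagrams and then recover the general case from the (sub)additivity of $w$ under connected sum, and to induct on $c$ by locating a crossing whose removal (via a flype or a reducing isotopy of the chord diagram) decreases $c$ in a controlled way while decreasing $w$ by at most the corresponding amount.
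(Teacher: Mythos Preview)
There is no proof in the paper for you to be compared against: Conjecture~\ref{conj:min-degree} is stated as an open conjecture, and the paper only reports that it has been verified computationally up to $c=12$ by Ohya--Shimizu and up to $c=16$ by the authors' program. So your proposal is not a rediscovery of the paper's argument; it is an attempt to settle a problem the paper leaves open.

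Beyond that, your plan has a genuine conceptual gap in the lower bound. You interpret $a^{min}(c)$ as the minimum diagrammatic warping degree over reduced alternating diagrams with $c$ crossings, but that is not how the quantity is defined. By the paragraph preceding the conjecture, $a^{min}(c)$ is the minimum of the knot invariant $a(K)$ over all knots $K$ admitting a reduced alternating diagram with $c$ crossings, and $a(K)$ itself is a minimum over \emph{all} diagrams of $K$, alternating or not, of any crossing number. Your chord-diagram/discharging strategy, even if it succeeded, would only establish $w(D)\ge\lceil c/4\rceil$ for every reduced alternating $c$--crossing diagram $D$; it would say nothing about whether such a knot might achieve a smaller ascending number on some non-alternating or higher-crossing diagram. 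Indeed, Tables~\ref{tab:8} and~\ref{tab:9} show many alternating knots whose ascending number is realized only on non-minimal (and non-alternating) diagrams, so this is not a hypothetical concern. Bridging this gap would require a lower bound on $a(K)$ coming from an actual knot invariant (in the spirit of Higa's Conway-polynomial bounds cited in the tables), not from the combinatorics of a single preferred diagram.

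Your upper-bound construction via connected sums of figure-eights and small blocks is, by contrast, sound in outline and does address the correct quantity, since $a(K)\le w(D)$ for any diagram $D$ of $K$ and connected sums of reduced alternating diagrams are reduced alternating with additive crossing number.
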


This conjecture was confirmed up to 12 crossings by Ohya and Shimizu~\cite[Table~1]{Ohya-Shimizu:lower} and is verified up to 16 crossings by our computer program.

\subsection*{Organization}

Section~\ref{sec:proofs} contains the proofs of Theorems~\ref{thm:positive_braid} and~\ref{thm:positive_strong} concerning positive braid closures.
Section~\ref{sec:low} contains the proof of Theorem~\ref{thm:alt_12} and a discussion of the data on low-crossing knots produced by our computer program, which is available at \url{http://jeffreymeier.org}.

\subsection*{Acknowledgements}

We are indebted to Evan Scott for alerting us of the literature on ascending number and warping degree initiated by Ozawa and Shimizu, respectively, and we are grateful to Sebastian Baader for an email correspondence regarding this literature.
We thank Matt Hedden for pointing out that Whitehead doubles can be strongly quasipositive, correcting an oversight in the first version of this paper.
The second author was supported by NSF grant DMS-2405324.

\section{Proofs of Theorems~\ref{thm:positive_braid} and~\ref{thm:positive_strong}}
\label{sec:proofs}

Let $\beta$ be a positive braid on $n$--strands, and let $D$ be the standard diagram for the braid, drawn from left to right with a crossing for each Artin generator.
Identifying the left and right sides of the diagram $D$ gives a diagram $\widehat D$ for the braid closure $\widehat\beta$.
An example is shown in Figure~\ref{fig:pos_braid_rc}.
Choose as a starting point for the roller-coaster algorithm the point $p$ at the top-left of the diagram $D$, and orient the knot to the right near $p$.

\begin{figure}[ht!]
	\centering
	\includegraphics[width=.9\textwidth]{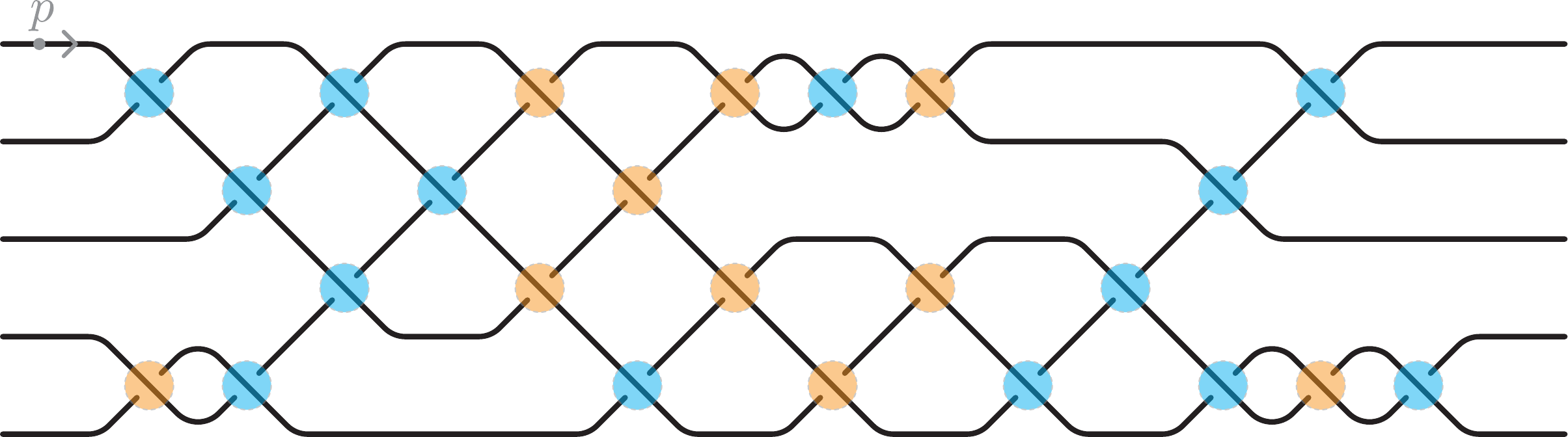}
	\caption{The roller-coaster algorithm realizes the unknotting number of closures of positive braids.}
	\label{fig:pos_braid_rc}
\end{figure}

Running the roller-coaster algorithm on $\widehat D$ starting at $p$ and according to the indicated orientation, let $\Aa$ denote the set of crossings that are first encountered from above, and let $\Bb$ denote the set of crossings that are first encountered from below.

\begin{lemma}
\label{lem:pos_braid}
	$|\Aa| - |\Bb| = n-1$.
\end{lemma}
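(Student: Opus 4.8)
The plan is to reduce the count $|\Aa|-|\Bb|$ to a combinatorial statement about sorting a permutation by adjacent transpositions. Because $\widehat\beta$ is a knot, the underlying permutation $\pi$ of $\beta$ (sending the left endpoint at height $j$ to the right endpoint at height $\pi(j)$) is an $n$--cycle, so the traversal starting at $p$ splits into exactly $n$ left-to-right passes through $D$, which I will call \emph{sweeps} $S_1,\dots,S_n$ in the order traversed; consecutive sweeps are joined by the closure arcs, and $S_k$ enters on the left at height $\pi^{k-1}(1)$ and exits on the right at height $\pi^{k}(1)$. First I would record, at each vertical slice of $D$, the traversal-index of the sweep occupying each of the $n$ heights. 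Reading the heights from top to bottom gives a permutation of $\{1,\dots,n\}$ that equals some arrangement $L$ on the far left and some arrangement $R$ on the far right, and each positive Artin generator $\sigma_i$ simply transposes the entries in the adjacent slots $i,i+1$.

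The key local observation is that, reading off the crossing convention in Figure~\ref{fig:pos_braid_rc}, a crossing is first met on the earlier (smaller-index) of its two sweeps, and it is first encountered \emph{from above} exactly when the higher of its two strands carries the smaller sweep-index, i.e.\ exactly when the two entries being transposed are in ascending order. Equivalently, the crossing lies in $\Aa$ precisely when performing its transposition \emph{creates} an inversion and in $\Bb$ precisely when it \emph{removes} one. Writing $\epsilon(c)=+1$ for $c\in\Aa$ and $\epsilon(c)=-1$ for $c\in\Bb$, this says $\epsilon(c)$ equals the change in the inversion count produced by the corresponding adjacent transposition, so the contributions telescope:
\[
|\Aa|-|\Bb|=\sum_{c}\epsilon(c)=\mathrm{inv}(R)-\mathrm{inv}(L).
\]

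It then remains to evaluate $\mathrm{inv}(R)-\mathrm{inv}(L)$. Tracking how each sweep enters and exits, the closure arcs force $L(k)=R(k)+1$ for every height $k$, read cyclically so that the value $n$ wraps to $1$; that is, $L$ is obtained from $R$ by the cyclic shift of \emph{values} $v\mapsto v+1 \pmod n$. Such a shift leaves the relative order of every pair of entries unchanged except for the pair involving the entry of value $n$, which becomes the new minimum and hence reverses its relation with every other entry. That entry belongs to the last sweep $S_n$, which exits on the right at height $\pi^{n}(1)=1$, so in $R$ it occupies the top slot; there it forms an inversion with each of the $n-1$ entries below it, whereas after the shift it forms none. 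Every other pair is unaffected, so $\mathrm{inv}(R)-\mathrm{inv}(L)=n-1$, which is the claim.

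The main obstacle is the second step: aligning the orientation and over/under conventions so that the sign is $+(n-1)$ rather than $-(n-1)$, and checking that ``first encountered'' is well defined. The latter uses that $\widehat\beta$ is connected, so each arc of $\widehat D$ is traversed once and the two strands at any crossing belong to two \emph{distinct} sweeps, traversed at different times; in particular no sweep meets a crossing twice, so the comparison above is genuinely between two different indices. I would pin the global sign by verifying the identity on a small example, such as the right-handed trefoil $\widehat{\sigma_1^{3}}$, where the algorithm changes only the middle crossing and $|\Aa|-|\Bb|=2-1=1=n-1$. The inversion bookkeeping in the third step is then routine.
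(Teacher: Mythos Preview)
Your argument is correct and genuinely different from the paper's. The paper proceeds by induction on the number of crossings, splitting into two geometric cases: if the diagram contains a bigon, the authors smooth its two corners and show that each of $|\Aa|$ and $|\Bb|$ drops by exactly one; if there is no bigon, they locate a particular crossing, resolve it, and strip off an entire strand to reduce to an $(n-1)$--strand braid. Your approach bypasses induction entirely by recognizing that the sequence of positive generators is a sequence of adjacent transpositions acting on the word of sweep-labels, so that $\epsilon(c)$ is literally the change in inversion number and the sum telescopes to $\mathrm{inv}(R)-\mathrm{inv}(L)$; the final evaluation then reduces to the single observation that $R(1)=n$ while $L(1)=1$ and all other positions shift uniformly. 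Your route is shorter, uses only the $n$--cycle hypothesis, and explains structurally why the answer is $n-1$ (it is the number of inversions contributed by the value $n$ sitting in the top slot of $R$). The paper's inductive reduction, on the other hand, is more pictorial and yields as a byproduct a decomposition of any positive braid diagram into bigon moves and strand removals, which may be of independent use. One small point worth tightening in your write-up: rather than deferring the sign to the trefoil check, you can simply note that $L(1)=1$ forces $\mathrm{inv}(L)$ to have no contribution from position~$1$ while $R(1)=n$ contributes $n-1$ inversions, and that the convention ``upper-incoming strand is the over-strand'' is exactly what makes an ascending pair at positions $i,i+1$ correspond to an $\Aa$--crossing; this pins the sign without appeal to an example.
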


\begin{proof}
	We induct on the number of crossings $C$ in the diagram $D$.
	For the base case, since $\widehat D$ is a diagram for a knot, there must be at least $n-1$ crossings, and each Artin generator must occur at least once.
	Thus, the base case is the positive braid diagram on $n$--strands for the torus knot $T(n,1)$.
	It is clear in this case that $|\Aa|= n-1$ and $|\Bb| = 0$, as desired.
	
	Assume for the inductive hypothesis that the lemma holds for any diagram with fewer than $C$ crossings, and assume that $D$ has $C$ crossings.
	Two strands $\gamma$ and $\gamma'$ that intersect in two crossings $c$ and $c'$ form a bigon: the union of the portions of $\gamma$ and $\gamma'$ between these crossings.
	A bigon is innermost if it contains no other bigons.
	Our inductive step takes two different forms, depending on whether or not a bigon is present.
	
	First, assume a bigon is present.
	This is equivalent to the presence of two strands of $D$ that cross more than once.
	Let $B$ be an innermost bigon, and denote the vertices of $D$ by $c$ and $c'$.
	Let $\gamma$ and $\gamma'$ be the strands of $D$ crossing at $c$ and $c'$, hence forming the innermost bigon.
	Let $\omega$ and $\omega'$ denote the portions of $\gamma$ and $\gamma'$, respectively, lying between $c$ and $c'$.
	See Figure~\ref{fig:bigon_rc} for an illustrative example of the following discussion.
	
\begin{figure}[ht!]
	\centering
	\includegraphics[width=.9\textwidth]{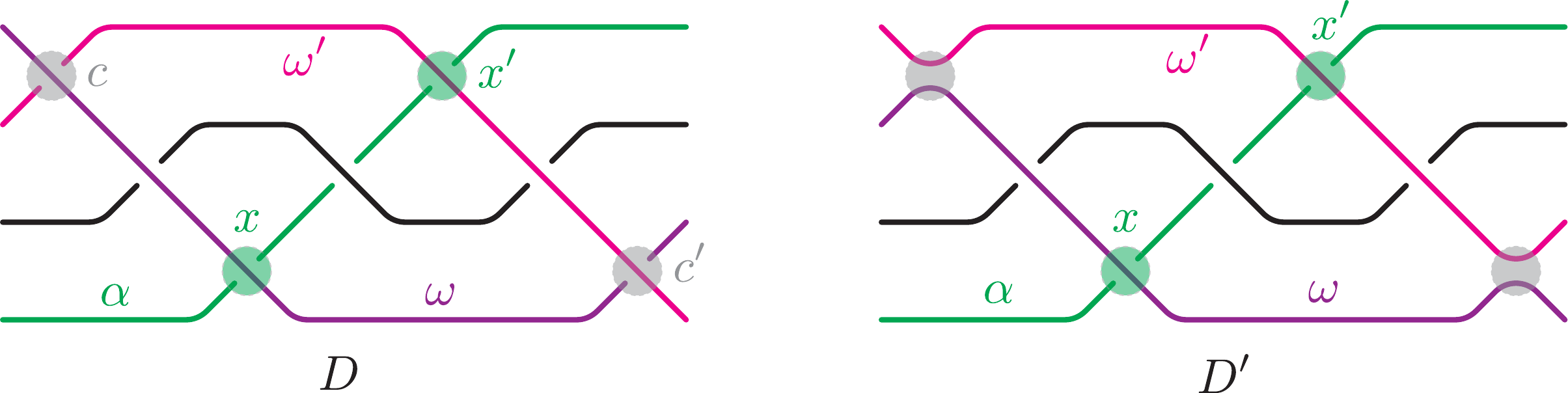}
	\caption{Working with a bigon in a positive braid diagram}
	\label{fig:bigon_rc}
\end{figure}
	
	Since $B$ is innermost, any other strand of $D$ that intersects $B$ must traverse it, entering by crossing one edge of $B$ and exiting via the other edge.
	Since $D$ is a positive braid diagram, if a strand $\alpha$ of $D$ enters the bigon $B$ by passing under (respectively, over) one edge of $B$, then it exits by passing under (respectively, over) the other edge of $B$.
	It follows $\alpha$ only passes over or only passes under at every crossing in $B$.
	
	Now, let $D'$ be the diagram obtained by smoothing the crossings $c$ and $c'$ according to the orientation of $D$.
	The roller-coaster algorithm on $D$ induces one on $D'$ that has an identical effect on every crossing of $D$ other than those along $\alpha$.
	For each crossing $x$ on $\alpha$, there is a corresponding crossing $x'$ on $\alpha$.
	Because the algorithm on $D'$ traverses $\omega$ and $\omega'$ in the opposite order from the algorithm on $D$, it follows that the algorithm on $D'$ changes $x$ (respectively, $x'$) if and only if the algorithm on $D$ changes $x'$ (respectively, $x$).
	
	Let $\Aa'$ (respectively, $\Bb'$) denote the crossings that are first encountered from above (respectively, below) during the induced roller-coaster algorithm on $D'$.
	Note that one of $c$ and $c'$ lies in each of $\Aa'$ and $\Bb'$.
	It follows that $|\Aa'| = |\Aa|-1$ and $|\Bb'| = |\Bb|-1$.
	By the inductive hypothesis, since $D'$ has $C-2$ crossings, the roller-coaster algorithm applied to $D'$ has the property that $|\Aa'|-|\Bb'| = n-1$.
	The claim of the lemma for $D$ follows.
	
	Now suppose that $D$ contains no bigon, so each strand of $D$ intersects every other strand in at most one point.
	If this is the case, then each strand of $D$ can be arranged to be increasing or decreasing.
	Denote the strands by $\gamma_1,\ldots,\gamma_n$ based on the order they are traversed, and let $\gamma_i$ be the first strand that is increasing, rather than decreasing.
	(Of course, $\gamma_1$ must be decreasing.)
	Let $c$ be the crossing involving $\gamma_i$ and $\gamma_{i-1}$, which must exist since $\gamma_{i-1}$ is decreasing and ends at the same height that $\gamma_i$, which is increasing, starts.
	Note that $c$ was first encountered from above along $\gamma_{i-1}$; see Figure~\ref{fig:resolution_rc}, where $i=3$.
	
	Let $D'$ be the diagram obtained by resolving $c$ according to the orientation.
	Let $\alpha$ denote the strand of $D'$ containing the initial point of $\gamma_i$.
	Note that the endpoints of $\alpha$ lie at the same height; the closure of $D'$ has two components.
	Every crossing encountered when traversing $\alpha$ from its left endpoint is encountered in the same way (from above or from below) as when we traversed $\gamma_{i-1}$ and $\gamma_i$, though the order of these encounters may be cyclically permuted.
	Since $D'$ is still a positive diagram, half of the crossings encountered along $\alpha$ are from above, while the other half are from below; suppose there are $m$ of each.
	
\begin{figure}[ht!]
	\centering
	\includegraphics[width=.75\textwidth]{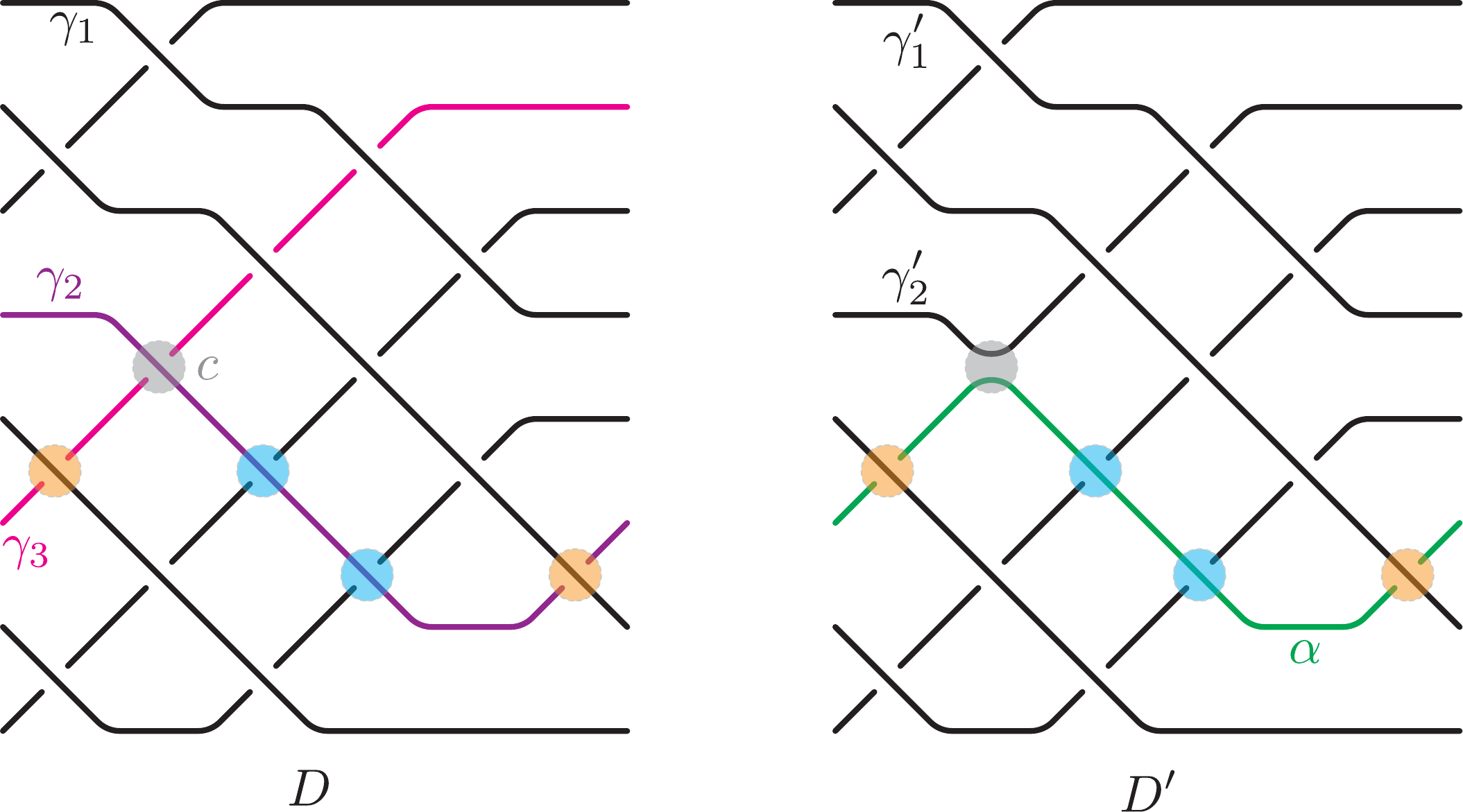}
	\caption{Resolving a crossing in a positive braid diagram with no bigons}
	\label{fig:resolution_rc}
\end{figure}
	
	Let $D''$ denote the diagram obtained by removing the strand $\alpha$ completely.
	Note that $D''$ is a $(n-1)$--strand braid; let $\Aa''$ (respectively, $\Bb''$) denote the crossings of $D''$ first encountered from above (respectively, below) during the induced roller-coaster algorithm on $D''$.
	We have $|\Aa''| = |\Aa|- m -1$ and $|\Bb''| = |\Bb| - m$.
	Since $D''$ has fewer crossings than $D$, our inductive hypothesis applies, so $|\Aa''| - |\Bb''| = n-2$.
	The claim of the lemma for $D$ follows.
\end{proof}

\begin{proof}[Proof of Theorem~\ref{thm:positive_braid}]
	Let $K$ be the closure of a positive braid $\beta$ with $n$ strands, and let $D$ be a diagram for $\beta$ with $C$ crossings.
	By Lemma~\ref{lem:pos_braid}, running the roller-coaster algorithm from left to right, starting at the top-left involves first encountering $|\Aa|$ crossings from above and $|\Bb|$ crossings from below, with $|\Aa| - |\Bb| = n-1$.
	So, $a(K)\leq |\Bb|$.
	
	On the other hand, Rudolph proved that
	$$u(K) = g(K) = \frac{C - n + 1}{2},$$
	giving the lower-bound in~\cite{Rudolph:QP} and the upper-bound in~\cite{Rudolph:Seifert}.
	Since $C = |\Aa| + |\Bb|$, this means
	$$u(K) = \frac{|\Aa| + |\Bb| - (|\Aa| - |\Bb|)}{2} = |\Bb|.$$
	So, $a(K) = u(K)$.
\end{proof}

\begin{proof}[Proof of Theorem~\ref{thm:positive_strong}]
	The proofs of Lemma~\ref{lem:pos_braid} and Theorem~\ref{thm:positive_braid} indicate that the roller-coaster algorithm can be applied to any positive braid diagram $D$ of $K$ to realize the unknotting number: one simply needs to start at the top-left corner of the braid diagram, as indicated in Figure~\ref{fig:pos_braid_rc}.
	By hypothesis, we can assume that $D$ minimizes the braid index of $K$.
	Murasugi proved that positive braid diagrams minimize the crossing number of $K$ when they minimize the braid index of $K$~\cite[Proposition~7.4]{Murasugi:braid_index}.
	So, we have that $D$ minimizes the crossing number of $K$, as desired.
\end{proof}

We conclude this section by giving the roller-coaster unknottings of Stoimenow's knots mentioned in the introduction; see Figure~\ref{fig:stoimenow_rc}.

\begin{figure}[ht!]
	\centering
	\includegraphics[width=.75\textwidth]{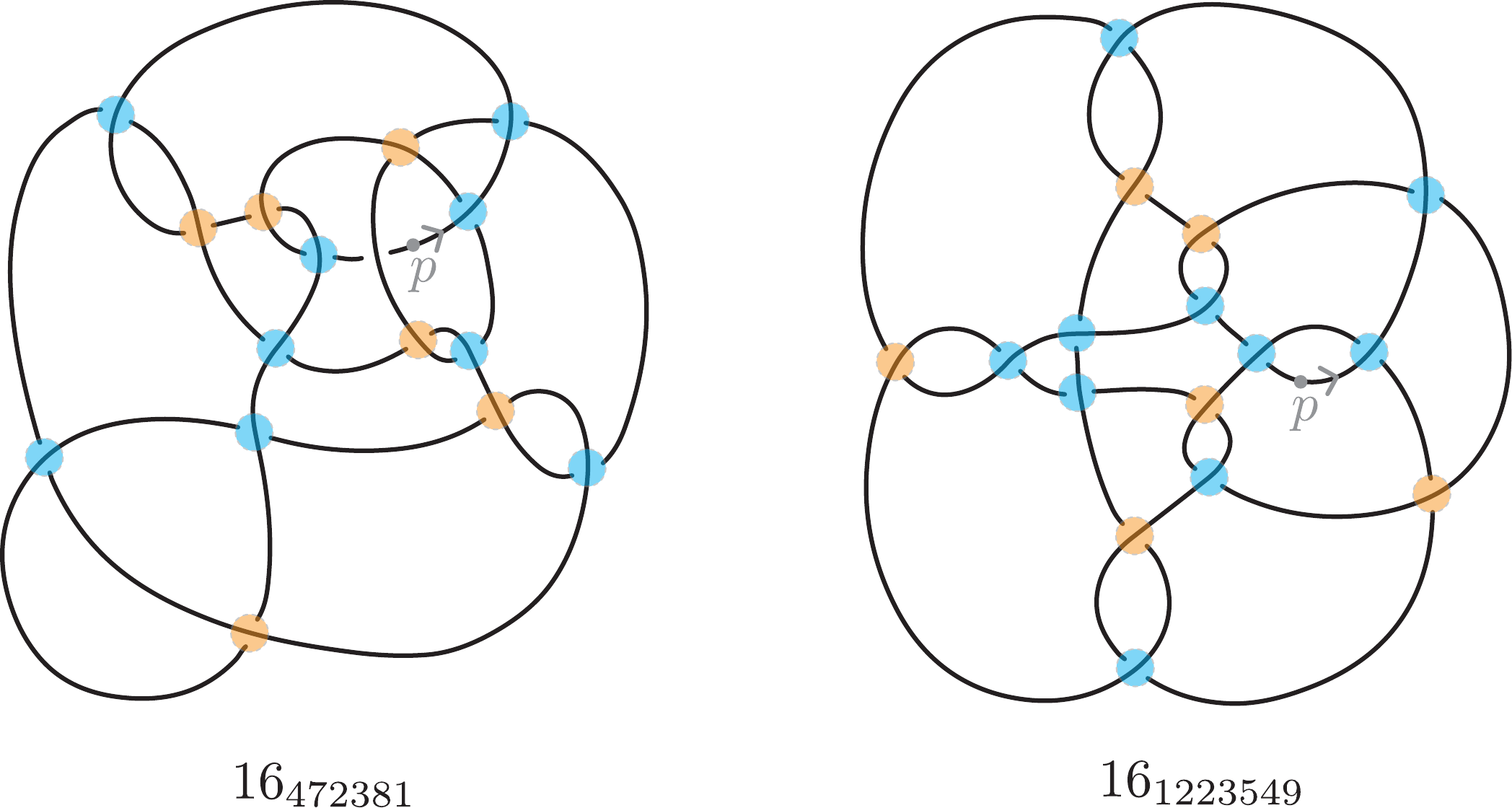}
	\caption{Stoimenow's knots have the strong roller-coaster property despite not being the closures of positive braids realizing their braid index.}
	\label{fig:stoimenow_rc}
\end{figure}

\section{Low-crossing knots}
\label{sec:low}

We created a computer program that ran the roller-coaster algorithm in every possible way for every knot diagram with at most 11 crossings.
Dowker-Thistlethwaite codes for the diagrams realizing the lowest ascending number found are given in the penultimate columns of Tables~\ref{tab:8} and~\ref{tab:9}.
Most noteworthy, our computer program found 11--crossing diagrams improving known upper bounds for three 9--crossing knots, which determines the ascending number for two of them.

\begin{proposition}
\label{prop:upper}
	The knots $\mathbf{9_{32}}$ and $\mathbf{9_{33}}$ have ascending number 2.
	The knot $\mathbf{9_{40}}$ has ascending number at most 3.
\end{proposition}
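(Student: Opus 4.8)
The plan is to bound $a(K)$ from above by exhibiting, for each knot, an explicit diagram on which a single run of the roller-coaster algorithm changes the claimed number of crossings, and then to bound $a(K)$ from below using the inequality $a(K) \ge u(K)$ recorded in the introduction. The matching of these two bounds is what determines the ascending number; where they fail to meet, only an inequality survives.

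For the upper bounds I would take the diagrams found by the computer search and recorded by their Dowker--Thistlethwaite codes in the tables. The key feature is that these are $11$-crossing diagrams, not the minimal $9$-crossing ones: enlarging the diagram is what allows the algorithm to change so few crossings. On such a diagram, after fixing a basepoint and an orientation, one traverses the knot once, labelling each crossing as ``over'' or ``under'' at the moment it is first encountered, and then changes exactly the crossings first met from below. The content of the upper bound is the assertion that, for the displayed diagrams and a suitable choice of starting data, exactly two such crossings occur for $\mathbf{9_{32}}$ and $\mathbf{9_{33}}$, and exactly three for $\mathbf{9_{40}}$. Verifying this is a finite bookkeeping task: follow the strand through the crossings in order and tally the under-firsts. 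This is precisely what the program checks, minimizing over all basepoints and orientations of each diagram.

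For the lower bounds I would invoke $a(K) \ge u(K)$ together with the known unknotting numbers $u(\mathbf{9_{32}}) = u(\mathbf{9_{33}}) = 2$ from~\cite{KnotInfo}. These match the upper bounds just obtained, forcing $a(\mathbf{9_{32}}) = a(\mathbf{9_{33}}) = 2$ and completing the first statement.

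The main obstacle is the residual gap for $\mathbf{9_{40}}$. Here $u(\mathbf{9_{40}}) = 2$, so $a(K) \ge u(K)$ yields only $a(\mathbf{9_{40}}) \ge 2$, while the diagram search produced no run changing fewer than three crossings. Thus $2 \le a(\mathbf{9_{40}}) \le 3$ and the value is not pinned down. Closing the gap would require either an explicit diagram realizing ascending number $2$ or a genuine lower bound strictly exceeding the unknotting number; the latter is exactly the kind of obstruction that lies beyond the elementary tools available here, which is why the statement for $\mathbf{9_{40}}$ is only an inequality.
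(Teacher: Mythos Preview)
Your overall strategy---upper bounds from explicit diagrams found by computer search, lower bounds from existing invariants---matches the paper's, but there is a genuine error in the lower-bound step for $\mathbf{9_{33}}$. You assert $u(\mathbf{9_{33}}) = 2$, but in fact $u(\mathbf{9_{33}}) = 1$ (see Table~\ref{tab:9} or~\cite{KnotInfo}). Thus the inequality $a(K) \ge u(K)$ yields only $a(\mathbf{9_{33}}) \ge 1$, which does not meet your upper bound of~$2$.

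The paper closes this gap with a different lower bound: the result of Fung (see~\cite[Theorem~2.8]{Ozawa:ascending} and~\cite{Fung:immersions}) that a knot has ascending number~$1$ if and only if it is a twist knot. Since $\mathbf{9_{33}}$ is not a twist knot, $a(\mathbf{9_{33}}) \ge 2$, and combined with the $11$--crossing diagram this gives $a(\mathbf{9_{33}}) = 2$. Your argument for $\mathbf{9_{32}}$ and the inequality for $\mathbf{9_{40}}$ are fine and agree with the paper.
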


We are able to determine which knots with at most 9 crossings have the roller-coaster property or the strong roller-coaster property, with a few exceptions.

\begin{theorem}
\label{thm:9cross}
	Among the 84 prime knots with at most 9 crossings:
	\begin{enumerate}
		\item exactly 12 have the strong roller-coaster property;
		\item at least 32 others have the roller-coaster property;
		\item at least 34 others have neither; and
		\item there are 6 knots that may or may not have the roller-coaster property, but it they do, a diagram with at least 12 crossings is required.
	\end{enumerate}
\end{theorem}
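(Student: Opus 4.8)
The plan is to reduce the theorem to a finite computation supplemented by a single theoretical lower bound, since the roller-coaster algorithm is inherently a source of upper bounds on $a(K)$. First I would extract from KnotInfo~\cite{KnotInfo} the unknotting number $u(K)$ for each of the $84$ prime knots $K$ with at most $9$ crossings; note that every minimal-crossing diagram of such a $K$ has at most $9$ crossings. Using the SnapPy-based program~\cite{SnapPy}, I would enumerate all knot diagrams with at most $11$ crossings, identify the knot type of each, and for every such diagram run the roller-coaster algorithm in every possible way (every basepoint and orientation), recording the minimum number of crossing changes. This produces, for each $K$, two quantities: the least number of crossing changes $a_{\min}(K)$ achieved over all minimal-crossing diagrams of $K$, and the least number $a_{11}(K)$ achieved over all diagrams of $K$ with at most $11$ crossings. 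Both are upper bounds for $a(K)$, and since $a(K)\ge u(K)$ always, comparing these with $u(K)$ drives the classification.

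Next I would sort the knots into four classes. If $a_{\min}(K)=u(K)$, then a minimal-crossing diagram realizes the unknotting number under the algorithm, so $K$ has the strong roller-coaster property; because the program checks every minimal-crossing diagram and every basepoint, this determination is exhaustive, and I expect it to yield exactly the $12$ knots of item (1). If instead $a_{\min}(K)>u(K)$ but $a_{11}(K)=u(K)$, then some non-minimal diagram with at most $11$ crossings realizes $u(K)$, so $K$ has the (weak) roller-coaster property but not the strong one, accounting for the knots of item (2). The remaining knots satisfy $a_{11}(K)>u(K)$, so no diagram with at most $11$ crossings realizes the unknotting number.

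For these last knots the search alone is inconclusive, since $a(K)$ is an infimum over infinitely many diagrams and the computation certifies only upper bounds. Here I would invoke Ozawa's theorem~\cite{Ozawa:ascending} that $a(K)=1$ if and only if $K$ is a twist knot: any $K$ with $u(K)=1$ that is not a twist knot satisfies $a(K)\ge 2>u(K)$, so it has neither property, and I expect these to furnish item (3). The knots that resist classification are precisely those with $u(K)\ge 2$ and $a_{11}(K)>u(K)$, for which Ozawa's $a=1$ characterization is useless and no other lower bound is available; we cannot decide whether $a(K)=u(K)$ might be realized by a diagram with at least $12$ crossings. These are the $6$ knots of item (4): they definitively fail the strong property, since their minimal diagrams have at most $9$ crossings and were checked directly, while the weak property would require extending the search past $11$ crossings. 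The ``at least'' in items (2) and (3) reflects exactly this ambiguity, as resolving each residual knot would place it into one class or the other.

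The main obstacle is this one-sidedness: the algorithm and the diagram enumeration produce only upper bounds for $a(K)$, whereas separating ``neither'' from ``undetermined'' requires certifying the strict inequality $a(K)>u(K)$. The only lower-bound input beyond the trivial $a(K)\ge u(K)$ is Ozawa's classification of the case $a(K)=1$, which cleanly disposes of all $u(K)=1$ knots but is silent once $u(K)\ge 2$. Sharpening the six residual cases — via a genuine lower bound for the ascending number or a targeted search over larger diagrams — is the step I expect to be genuinely hard, and it is exactly what item (4) leaves open.
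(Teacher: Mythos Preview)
Your overall framework matches the paper's: a computer search over all diagrams up to $11$ crossings for upper bounds on $a(K)$, combined with theoretical lower bounds to certify $a(K)>u(K)$ where possible. However, there is a genuine gap in your lower-bound toolkit. You invoke only two lower bounds: the trivial $a(K)\ge u(K)$, and Ozawa's characterization $a(K)=1\iff K$ is a twist knot (attributed to Fung). You then assert that the ``neither'' class in item~(3) is furnished entirely by non-twist knots with $u(K)=1$, and that the residual undetermined knots are exactly those with $u(K)\ge 2$ and $a_{11}(K)>u(K)$.

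This miscounts both classes. Six knots --- $\mathbf{8_2}$, $\mathbf{8_5}$, $\mathbf{9_{11}}$, $\mathbf{9_{20}}$, $\mathbf{9_{36}}$, $\mathbf{9_{43}}$ --- have $u(K)=2$ and $a_{11}(K)=3$. Your method cannot distinguish these from the six genuinely undetermined knots of item~(4), which have the same profile; you would be forced to report $28$ ``neither'' and $12$ ``undetermined''. The paper resolves these six by invoking Higa's lower bound on the ascending number derived from the Conway polynomial~\cite{Higa:conway}, which gives $a(K)\ge 3>u(K)$ in each case and places them firmly in item~(3). Without this third source of lower bounds, the counts in the theorem cannot be established.
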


\begin{proof}
	The claimed result are indicated in Tables~\ref{tab:8} and~\ref{tab:9}, which agree with the tables from~\cite{Jablan:families}, with the following exceptions.
	The DT codes given in the seventh column offer improved upper bounds for $\mathbf{9_{32}}$, $\mathbf{9_{33}}$, and $\mathbf{9_{40}}$.
	This proves that the first two of these have ascending number 2 and improves the upper bound for the ascending number of the third by one.
	Also, for $\mathbf{8_2}$, $\mathbf{8_5}$, $\mathbf{9_{11}}$, $\mathbf{9_{20}}$, $\mathbf{9_{36}}$, and $\mathbf{9_{43}}$, our table includes the improved lower bounds that were given by Higa using the Conway polynomial~\cite{Higa:conway}.
	Otherwise, if $u(K)\geq 2$, then the relevant lower bound either comes from the unknotting number (indicated by $u(K)$), and if $u(K)=1$ and $K$ is not a twist knot, then the relevant lower bound comes from the fact that the only knots with ascending number one are twist knots, which is attributed to the dissertation of Fung by Ozawa; see~\cite[Theorem~2.8]{Ozawa:ascending}, indicated with~\cite{Fung:immersions}.
\end{proof}

The last column of Tables~\ref{tab:8} and~\ref{tab:9} contains the value of an invariant that we call \emph{roller-coaster crossing number}, which is the smallest natural number $c$ such that $K$ admits a diagram with $c$ crossings for which the roller-coaster algorithm realizes the ascending number of the knot.
The notation $\{c, 12+\}$ means that this invariant takes value $c$ if the ascending number matches the upper bound given in the table and is at least 12 otherwise.

Our computer program is able to analyze alternating knots with larger crossing numbers.
By comparing the ascending numbers calculated by our program with the data on KnotInfo~\cite{KnotInfo}, we can prove Theorem~\ref{thm:alt_12}.

\begin{proof}[Proof of Theorem~\ref{thm:alt_12}]
	For part (1), see Table~\ref{tab:8}.
	The knots $\mathbf{12a_{181}}$ and $\mathbf{12a_{477}}$ each have unknotting number 2 or 3, according to KnotInfo~\cite{KnotInfo}.
	The knot $\mathbf{12a_{181}}$ has a diagram with DT code $[8, 6, 16, 10, 24, 14, 20, 18, 4, 22, 12, 2]$, which can be seen to have ascending number 3; the knot $\mathbf{12a_{477}}$ has a diagram with DT code $[14, 24, 22, 20, 6, 4, 2, 12, 10, 8, 18, 16]$, which can be seen to have ascending number 3.
\end{proof}

\renewcommand{\arraystretch}{1.6}

\begin{table}[!ht]
    \centering
    \resizebox*{!}{1.1\textwidth}{%
	\begin{tabular}{|c|c|c|c|c|c|c|c|}
    \hline
        \textbf{Name} & \textbf{Alternating} & \textbf{Unknotting} & \textbf{Ascending} & \textbf{Lower} & \textbf{Property:}  & \textbf{DT code} & \textbf{Roller-coaster} \\
          & & \textbf{Number} & \textbf{Number} & \textbf{Bound} & \textbf{SRC or RC} & & \textbf{Crossing} \\
          & & & & & & & \textbf{Number} \\ \hline
        $3_1$ & Y & 1 & 1 & $u(K)$ & SRC & [4, 6, 2] & 3 \\ \hline
        $4_1$ & Y & 1 & 1 & $u(K)$ & SRC & [4, 6, 8, 2] & 4 \\ \hline
        $5_1$ & Y & 2 & 2 & $u(K)$ & SRC & [6, 8, 10, 2, 4] & 5 \\ \hline
        $5_2$ & Y & 1 & 1 & $u(K)$ & RC & [-8, 10, 2, -12, 6, 4] & 6 \\ \hline
        $6_1$ & Y & 1 & 1 & $u(K)$ & RC & [-8, 12, 2, -10, -14, 4, 6] & 7 \\ \hline
        $6_2$ & Y & 1 & 2 & \cite{Fung:immersions} & neither & [4, 8, 10, 12, 2, 6] & 6 \\ \hline
        $6_3$ & Y & 1 & 2 & \cite{Fung:immersions} & neither & [4, 8, 10, 2, 12, 6] & 6 \\ \hline
        $7_1$ & Y & 3 & 3 & $u(K)$ & SRC & [8, 10, 12, 14, 2, 4, 6] & 7 \\ \hline
        $7_2$ & Y & 1 & 1 & $u(K)$ & RC & [-8, -14, -12, -18, 6, 2, -16, -4, 10] & 9 \\ \hline
        $7_3$ & Y & 2 & 2 & $u(K)$ & RC & [-8, 12, 14, 10, -2, 16, 4, 6] & 8 \\ \hline
        $7_4$ & Y & 2 & 2 & $u(K)$ & RC & [-14, 12, 10, -16, 2, 4, 6, -8] & 8 \\ \hline
        $7_5$ & Y & 2 & 2 & $u(K)$ & RC & [-8, 12, 14, -10, -2, -16, 4, 6] & 8 \\ \hline
        $7_6$ & Y & 1 & 2 & \cite{Fung:immersions} & neither & [8, 14, 12, 2, 6, 4, 10] & 7 \\ \hline
        $7_7$ & Y & 1 & 2 & \cite{Fung:immersions} & neither & [8, 12, 14, 2, 6, 4, 10] & 7 \\ \hline
        $8_1$ & Y & 1 & 1 & $u(K)$ & RC & [12, 6, 16, 18, -2, 14, 20, -8, 4, -10] & 10 \\ \hline
        $8_2$ & Y & 2 & 3 & \cite{Higa:conway} & neither & [10, 12, 14, 4, 16, 2, 6, 8] & 9 \\ \hline
        $8_3$ & Y & 2 & 2 & $u(K)$ & RC & [-8, 14, 12, 16, -18, 6, 4, 10, -2] & 9 \\ \hline
        $8_4$ & Y & 2 & 2 & $u(K)$ & RC & [-12, 8, 16, 14, 18, -2, -10, 4, 6] & 9 \\ \hline
        $8_5$ & Y & 2 & 3 & \cite{Higa:conway} & neither & [10, 14, 16, 12, 2, 6, 8, 4] & 9 \\ \hline
        $8_6$ & Y & 2 & 2 & $u(K)$ & RC & [8, 16, -12, 10, 18, 14, -6, 2, 4] & 9 \\ \hline
        $8_7$ & Y & 1 & [2, 3] & $u(K)$ & neither & [-10, -18, 14, 16, -12, -2, 4, 6, 8] & \{9, 12+\} \\ \hline
        $8_8$ & Y & 2 & 2 & $u(K)$ & RC & [14, -12, 18, 16, 6, -2, -4, 10, 8] & 9 \\ \hline
        $8_9$ & Y & 1 & [2, 3] & $u(K)$ & neither & [10, 12, 16, 14, 4, 2, 6, 8] & \{9, 12+\} \\ \hline
        $8_{10}$ & Y & 2 & [2, 3] & $u(K)$ & ? & [6, 16, 12, 2, 14, 4, 8, 10] & \{9, 12+\} \\ \hline
        $8_{11}$ & Y & 1 & 2 & \cite{Fung:immersions} & neither & [10, 12, -16, 18, 2, 8, 6, -4, 14] & 9 \\ \hline
        $8_{12}$ & Y & 2 & 2 & $u(K)$ & SRC & [10, 16, 14, 4, 2, 8, 6, 12] & 8 \\ \hline
        $8_{13}$ & Y & 1 & 2 & \cite{Fung:immersions} & neither & [-8, 12, 10, -18, 16, 14, 4, 6, 2] & 9 \\ \hline
        $8_{14}$ & Y & 1 & 2 & \cite{Fung:immersions} & neither & [-4, -10, -12, 16, -18, -2, 8, 6, 14] & 9 \\ \hline
        $8_{15}$ & Y & 2 & 2 & $u(K)$ & RC & [-10, 16, 12, 4, -18, 8, 6, 2, 14] & 9 \\ \hline
        $8_{16}$ & Y & 2 & 2 & $u(K)$ & RC & [14, -16, 20, 18, -2, 4, 6, 10, 8, 12] & 10 \\ \hline
        $8_{17}$ & Y & 1 & 2 & \cite{Fung:immersions} & neither & [-8, 10, 12, -14, 16, 18, 20, 2, 4, 6] & 10 \\ \hline
        $8_{18}$ & Y & 2 & 2 & $u(K)$ & SRC & [12, 14, 16, 2, 4, 6, 8, 10] & 8 \\ \hline
        $8_{19}$ & N & 3 & 3 & $u(K)$ & SRC & [12, -14, 16, -2, 4, -6, 8, -10] & 8 \\ \hline
        $8_{20}$ & N & 1 & 2 & \cite{Fung:immersions} & neither & [-12, 14, -16, 2, 4, -6, 8, 10] & 8 \\ \hline
        $8_{21}$ & N & 1 & 2 & \cite{Fung:immersions} & neither & [12, 6, 10, -16, 4, 14, 2, -8] & 8 \\ \hline
    \end{tabular}
    }
    \caption{Determining which knots with at most 8 crossings have the roller-coaster property or the strong roller-coaster property}
    \label{tab:8}
\end{table}

\begin{table}[!ht]
    \centering
    \resizebox*{!}{\dimexpr\textheight-5\baselineskip\relax}{%
	\begin{tabular}{|c|c|c|c|c|c|c|c|}
    \hline
        \textbf{Name} & \textbf{Alternating} & \textbf{Unknotting} & \textbf{Ascending} & \textbf{Lower} & \textbf{Property:}  & \textbf{DT code} & \textbf{Roller-coaster} \\
          & & \textbf{Number} & \textbf{Number} & \textbf{Bound} & \textbf{SRC or RC} & & \textbf{Crossing} \\
          & & & & & & & \textbf{Number} \\ \hline
        $9_1$ & Y & 4 & 4 & $u(K)$ & SRC & [10, 12, 14, 16, 18, 2, 4, 6, 8] & 9 \\ \hline
        $9_2$ & Y & 1 & 1 & $u(K)$ & RC & [16, 10, -20, -4, 22, 18, -2, 24, -14, -8, -6, -12] & 12 \\ \hline
        $9_3$ & Y & 3 & 3 & $u(K)$ & RC & [-10, 14, 16, 18, 12, -2, 20, 4, 6, 8] & 10 \\ \hline
        $9_4$ & Y & 2 & 2 & $u(K)$ & RC & [14, -10, 18, 20, -16, -22, 2, 12, 4, 6, 8] & 11 \\ \hline
        $9_5$ & Y & 2 & 2 & $u(K)$ & RC & [-20, -18, 14, 16, -2, 22, 4, 8, 6, -12, 10] & 11 \\ \hline
        $9_6$ & Y & 3 & 3 & $u(K)$ & RC & [-10, 14, 16, 18, -12, -2, -20, 4, 6, 8] & 10 \\ \hline
        $9_7$ & Y & 2 & 2 & $u(K)$ & RC & [18, 16, 14, 4, 22, 20, -2, 6, -8, 10, 12] & 11 \\ \hline
        $9_8$ & Y & 2 & 2 & $u(K)$ & RC & [4, 14, 12, -18, -6, 16, 2, 20, -8, -10] & 10 \\ \hline
        $9_9$ & Y & 3 & 3 & $u(K)$ & RC & [10, 12, 20, -16, -18, 2, 4, -6, -8, 14] & 10 \\ \hline
        $9_{10}$ & Y & 3 & 3 & $u(K)$ & RC & [8, -14, 12, 18, 20, 2, 16, -4, 6, 10] & 10 \\ \hline
        $9_{11}$ & Y & 2 & 3 & \cite{Higa:conway} & neither & [12, 18, 14, 16, 4, 2, 10, 6, 8] & \{9, 12+\} \\ \hline
        $9_{12}$ & Y & 1 & 2 & \cite{Fung:immersions} & neither & [14, -22, 20, 18, 6, 4, 2, 12, 10, 8, 16] & 11 \\ \hline
        $9_{13}$ & Y & 3 & 3 & $u(K)$ & RC & [-10, -12, 14, 16, -2, -20, 18, 6, 4, -8] & 10 \\ \hline
        $9_{14}$ & Y & 1 & 2 & \cite{Fung:immersions} & neither & [-10, 8, 14, 16, 12, -2, 20, 18, 6, 4] & 10 \\ \hline
        $9_{15}$ & Y & 2 & 2 & $u(K)$ & RC & [12, 20, -16, -18, 4, 2, 10, 8, -6, 14] & 10 \\ \hline
        $9_{16}$ & Y & 3 & 3 & $u(K)$ & RC & [14, 10, -16, -18, 2, 4, 20, 8, -6, 12] & 10 \\ \hline
        $9_{17}$ & Y & 2 & [2, 3] & $u(K)$ & ? & [12, 14, 16, 4, 18, 2, 10, 6, 8] & \{9, 12+\} \\ \hline
        $9_{18}$ & Y & 2 & 2 & $u(K)$ & RC & [12, -14, 18, -20, -22, 2, 10, 6, 4, 16, -8] & 11 \\ \hline
        $9_{19}$ & Y & 1 & 2 & \cite{Fung:immersions} & neither & [6, 8, 12, 20, -16, 18, 4, -10, -14, 2] & 10 \\ \hline
        $9_{20}$ & Y & 2 & 3 & \cite{Higa:conway} & neither & [12, 14, 18, 16, 6, 4, 2, 10, 8] & 9 \\ \hline
        $9_{21}$ & Y & 1 & 2 & \cite{Fung:immersions} & neither & [12, 20, 14, -16, 4, 2, 10, -18, -6, 8] & 10 \\ \hline
        $9_{22}$ & Y & 1 & [2, 3] & \cite{Fung:immersions} & neither & [6, 8, 12, 18, 14, 16, 4, 10, 2] & \{9, 12+\} \\ \hline
        $9_{23}$ & Y & 2 & 2 & $u(K)$ & RC & [-12, 14, -18, 20, 22, -6, 16, 4, 2, -10, 8] & 11 \\ \hline
        $9_{24}$ & Y & 1 & [2, 3] & \cite{Fung:immersions} & neither & [8, 16, 12, 10, 18, 14, 6, 2, 4] & \{9, 12+\} \\ \hline
        $9_{25}$ & Y & 2 & 2 & $u(K)$ & RC & [14, 20, -10, 18, -4, -6, 2, 12, 8, 16] & 10 \\ \hline
        $9_{26}$ & Y & 1 & [2, 3] & \cite{Fung:immersions} & neither & [8, 10, 14, 16, 12, 2, 18, 6, 4] & \{9, 12+\} \\ \hline
        $9_{27}$ & Y & 1 & [2, 3] & \cite{Fung:immersions} & neither & [10, 12, 16, 18, 2, 8, 6, 4, 14] & \{9, 12+\} \\ \hline
        $9_{28}$ & Y & 1 & [2, 3] & \cite{Fung:immersions} & neither & [10, 16, 12, 4, 18, 8, 6, 2, 14] & \{9, 12+\} \\ \hline
        $9_{29}$ & Y & 2 & [2, 3] & $u(K)$ & ? & [8, 18, -12, 2, 16, -4, 10, 20, 6, 14] & \{10, 12+\} \\ \hline
        $9_{30}$ & Y & 1 & [2, 3] & \cite{Fung:immersions} & neither & [8, 14, 18, 2, 6, 16, 10, 4, 12] & \{9, 12+\} \\ \hline
        $9_{31}$ & Y & 2 & [2, 3] & $u(K)$ & ? & [10, 6, 14, 12, 18, 16, 4, 8, 2] & \{9, 12+\} \\ \hline
        $9_{32}$ & Y & 2 & 2 & $u(K)$ & RC & [16, 18, -20, 22, 2, 6, -4, 10, 8, 14, 12] & 11 \\ \hline
        $9_{33}$ & Y & 1 & 2 & \cite{Fung:immersions} & neither & [-10, -16, 14, 12, -2, 18, 20, 22, 8, 6, 4] & 11 \\ \hline
        $9_{34}$ & Y & 1 & 2 & \cite{Fung:immersions} & neither & [16, -10, 20, 18, -2, 8, 6, 4, 12, 14] & 10 \\ \hline
        $9_{35}$ & Y & 3 & 3 & $u(K)$ & RC & [-12, 14, 16, 20, 18, -2, -10, 6, 4, 8] & 10 \\ \hline
        $9_{36}$ & Y & 2 & 3 & \cite{Higa:conway} & neither & [14, 18, 10, 16, 4, 6, 2, 12, 8] & 9 \\ \hline
        $9_{37}$ & Y & 2 & 2 & $u(K)$ & RC & [-18, 10, 14, -20, 12, 2, 16, 4, 6, -8] & 10 \\ \hline
        $9_{38}$ & Y & 3 & 3 & $u(K)$ & RC & [8, 12, -18, 2, 14, 20, 6, 10, -4, 16] & 10 \\ \hline
        $9_{39}$ & Y & 1 & [2, 3] & \cite{Fung:immersions} & neither & [-10, 16, 22, 20, 14, -4, 6, 18, 2, 8, 12] & \{11, 12+\} \\ \hline
        $9_{40}$ & Y & 2 & [2, 3] & $u(K)$ & ? & [-16, 20, 18, -22, 14, 4, 2, 8, 6, 12, 10] & \{11, 12+\} \\ \hline
        $9_{41}$ & Y & 2 & [2, 3] & $u(K)$ & ? & [6, 10, 14, 12, 16, 2, 18, 4, 8] & \{9, 12+\} \\ \hline
        $9_{42}$ & N & 1 & 2 & \cite{Fung:immersions} & neither & [4, 8, -14, 18, 2, -16, -6, -10, -12] & 9 \\ \hline
        $9_{43}$ & N & 2 & 3 & \cite{Higa:conway} & neither & [-14, -18, 10, 16, 4, 6, -2, -12, 8] & 9 \\ \hline
        $9_{44}$ & N & 1 & 2 & \cite{Fung:immersions} & neither & [8, 18, -12, 2, 6, 16, -4, 10, 14] & 9 \\ \hline
        $9_{45}$ & N & 1 & 2 & \cite{Fung:immersions} & neither & [8, 20, -14, 2, 6, 18, -16, -4, 10, 12] & 10 \\ \hline
        $9_{46}$ & N & 2 & 2 & $u(K)$ & SRC & [-12, 6, 14, 18, 16, -2, -10, 4, 8] & 9 \\ \hline
        $9_{47}$ & N & 2 & 2 & $u(K)$ & SRC & [-14, 8, 10, 12, 18, 16, -2, 6, 4] & 9 \\ \hline
        $9_{48}$ & N & 2 & 2 & $u(K)$ & SRC & [-10, -8, 12, 16, -2, 14, 4, 18, 6] & 9 \\ \hline
        $9_{49}$ & N & 3 & 3 & $u(K)$ & SRC & [14, -8, 12, 16, -2, 18, 4, 10, 6] & 9 \\ \hline
    \end{tabular}
    }
    \caption{Determining which knots with 9 crossings have the roller-coaster property or the strong roller-coaster property}
    \label{tab:9}
\end{table}

\newpage

\

\newpage

\bibliographystyle{amsalpha}
\bibliography{roller-coaster.bib}

\newcommand{\etalchar}[1]{$^{#1}$}
\providecommand{\bysame}{\leavevmode\hbox to3em{\hrulefill}\thinspace}
\providecommand{\MR}{\relax\ifhmode\unskip\space\fi MR }
\providecommand{\MRhref}[2]{%
  \href{http://www.ams.org/mathscinet-getitem?mr=#1}{#2}
}
\providecommand{\href}[2]{#2}
\begin{thebibliography}{KLM{\etalchar{+}}23}

\bibitem[BW84]{Boileau-Weber:noeds}
Michel Boileau and Claude Weber, \emph{Le probl\`eme de {J}. {M}ilnor sur le
  nombre gordien des n\oe uds alg\'{e}briques}, Enseign. Math. (2) \textbf{30}
  (1984), no.~3-4, 173--222. \MR{767901}

\bibitem[CDGW]{SnapPy}
Marc Culler, Nathan~M. Dunfield, Matthias Goerner, and Jeffrey~R. Weeks,
  \emph{Snap{P}y, a computer program for studying the geometry and topology of
  $3$-manifolds}, Available at \url{http://snappy.computop.org} (18/09/2024).

\bibitem[Fun96]{Fung:immersions}
Tat~Sang Fung, \emph{Immersions in knot theory}, ProQuest LLC, Ann Arbor, MI,
  1996, Thesis (Ph.D.)--Columbia University. \MR{2694328}

\bibitem[Hig19]{Higa:conway}
Ryuji Higa, \emph{Ascending number and {C}onway polynomial}, J. Knot Theory
  Ramifications \textbf{28} (2019), no.~9, 1950053, 7. \MR{3999136}

\bibitem[Jab98]{Jablan:families}
Slavik~V. Jablan, \emph{Unknotting number and {$\infty$}-unknotting number of a
  knot}, Filomat (1998), no.~12, 113--120. \MR{1698290}

\bibitem[KLM{\etalchar{+}}23]{Kegel-Lewark-et-Al:unknotting}
Marc Kegel, Lukas Lewark, Naageswaran Manikandan, Filip Misev, Leo Mousseau,
  and Marithania Silvero, \emph{On unknotting fibered positive knots and
  braids}, arXiv:2312.07339, 2023.

\bibitem[KM93]{Kronheimer-Mrowka:gauge}
P.~B. Kronheimer and T.~S. Mrowka, \emph{Gauge theory for embedded surfaces.
  {I}}, Topology \textbf{32} (1993), no.~4, 773--826. \MR{1241873}

\bibitem[LM24]{KnotInfo}
Charles Livingston and Allison~H. Moore, \emph{Knotinfo: Table of knot
  invariants}, URL: \url{knotinfo.math.indiana.edu}, June 2024.

\bibitem[Mur91]{Murasugi:braid_index}
Kunio Murasugi, \emph{On the braid index of alternating links}, Trans. Amer.
  Math. Soc. \textbf{326} (1991), no.~1, 237--260. \MR{1000333}

\bibitem[OS22]{Ohya-Shimizu:lower}
Atsushi Ohya and Ayaka Shimizu, \emph{Lower bounds for the warping degree of a
  knot projection}, J. Knot Theory Ramifications \textbf{31} (2022), no.~13,
  Paper No. 2250091, 9. \MR{4523299}

\bibitem[Oza10]{Ozawa:ascending}
Makoto Ozawa, \emph{Ascending number of knots and links}, J. Knot Theory
  Ramifications \textbf{19} (2010), no.~1, 15--25. \MR{2640989}

\bibitem[Rud83]{Rudolph:Seifert}
Lee Rudolph, \emph{Braided surfaces and {S}eifert ribbons for closed braids},
  Comment. Math. Helv. \textbf{58} (1983), no.~1, 1--37. \MR{699004}

\bibitem[Rud93]{Rudolph:QP}
\bysame, \emph{Quasipositivity as an obstruction to sliceness}, Bull. Amer.
  Math. Soc. (N.S.) \textbf{29} (1993), no.~1, 51--59. \MR{1193540}

\bibitem[Sch85]{Scharlemann:unknotting}
Martin~G. Scharlemann, \emph{Unknotting number one knots are prime}, Invent.
  Math. \textbf{82} (1985), no.~1, 37--55. \MR{808108}

\bibitem[Shi10]{Shimizu:warping}
Ayaka Shimizu, \emph{The warping degree of a knot diagram}, J. Knot Theory
  Ramifications \textbf{19} (2010), no.~7, 849--857. \MR{2673686}

\bibitem[Sto02]{Stoimenow:crossing_number}
A.~Stoimenow, \emph{On the crossing number of positive knots and braids and
  braid index criteria of {J}ones and {M}orton-{W}illiams-{F}ranks}, Trans.
  Amer. Math. Soc. \textbf{354} (2002), no.~10, 3927--3954. \MR{1926860}

\end{thebibliography}

\end{document}